\newcommand{\cf}{{\it cf.} }
\newcommand{\ie}{{\it i.e.} }
\newcommand{\eg}{{\it e.g.} }
\newcommand{\F}{\mathbb{F}}
\newcommand{\Q}{\mathbb{Q}}
\newcommand{\C}{\mathbb{C}}
  \newcommand{\Z}{\mathbb{Z}}
\renewcommand{\P}{\mathbb{P}}
\newcommand{\sG}{{\mathcal{G}}}
\newcommand{\sT}{{\mathcal{T}}}
\newcommand{\sX}{{\mathcal{X}}}
\newcommand{\inj}{\hookrightarrow}
\newcommand{\surj}{\rightarrow\!\!\!\!\!\rightarrow}
\newcommand{\Ker}{\operatorname{Ker}}
 \newcommand{\Char}{\operatorname{char}}
\newcounter{spec}
\newtheorem{thm}{Theorem}[section]
\newtheorem{lemma}[thm]{Lemma}
\newtheorem{cor}[thm]{Corollary}
\newtheorem{conj}[thm]{Conjecture}
\theoremstyle{definition}
\newtheorem{defn}[thm]{Definition}
\newtheorem{ex}[thm]{Example}
\newtheorem{exs}[thm]{Examples}
\newtheorem{qn}[thm]{Question}
\newtheorem{rem}[thm]{Remark}
\newtheorem{rems}[thm]{Remarks}
\numberwithin{equation}{section}
\begin{document}

 \title{On the observability of Galois representations and the Tate conjecture.}
 
\author{Yves Andr\'e}
 
   \address{ }
\email{ }
 \keywords{observability, semisimplicity, Tate conjecture, $p$-adic periods}
\subjclass{11G, 14F, 14L}
   
 \begin{sloppypar}

  \begin{abstract}  The Tate conjecture has two parts: i) Tate classes are linear combination of algebraic classes, ii) semisimplicity of Galois representations (for smooth projective varieties).
 B. Moonen proved that i) implies ii) in characteristic 0, using $p$-adic Hodge theory. 

We show that an unconditional result lies behind this implication: the {\it observability} of arithmetic monodromy groups of geometric origin (in any characteristic) - which leads to a sharpening of Moonen's result. 

We also discuss another aspect of the Tate conjecture related to the transcendence of $p$-adic periods.
   \end{abstract}
 \maketitle
  
   \section*{Introduction}
  
Let $K$ be a field of characteristic $p\geq 0$, 
 $K^s$ a separable closure of $K$ and $G_K= Gal(K^s/K)$ the absolute Galois group. 

\medskip Let $X$ be a smooth projective $K$-variety. For $\ell \neq p $, and any $j\in \Z$, the twisted $\ell$-adic etale cohomology $H^\ast(\bar X, \Q_\ell(j))$ of $\bar X = X_{K^s}$ is a continuous representation of $G_K$. 

 Let us denote by $\sT^\ast(X)$ the graded $\Q_\ell$-space of {\it Tate classes}, \ie $G_K$-invariants in $\oplus\, H^{2j}(\bar X, \Q_\ell(j))$.
 
 The Tate conjecture has two parts\footnote{see \eg \cite{To17} for some historical background}:
 
 \begin{conj}[Tate] Assume that $K$ is finitely generated over its prime field. Then:
 
 {   $S(X):$  \, $H^\ast(\bar X, \Q_\ell)$ is a semisimple $G_K$-representation.}
          
         {  $T(X):$ \, the cycle class map $CH^\ast(X)\otimes \Q_\ell \rightarrow \sT^\ast(X)$ is surjective.\,}\end{conj}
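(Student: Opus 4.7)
The plan is to address both parts of the conjecture by reducing first to the case where $K$ is a finite field, and then attacking $T(X)$ through $p$-adic Hodge theory while obtaining $S(X)$ as a consequence. Since $K$ is finitely generated over its prime field, standard spreading-out yields a smooth projective morphism $\pi : \sX \to S$ with $S$ of finite type over $\Z$ or $\F_p$ and generic fibre $X$. For $\ell$ invertible on $S$, the sheaves $R^i\pi_\ast \Q_\ell$ are lisse, and by a Chebotarev-type density argument the topological image of $G_K$ is generated by Frobenius elements at closed points of $S$. Both semisimplicity and surjectivity of the cycle class map can thus be tested at $K = \F_q$, and I would from now on assume $K$ finite.

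For $T(X)$ over $\F_q$, the starting point is that Tate classes are exactly Frobenius-fixed vectors in $H^{2j}(\bar X, \Q_\ell(j))$. The Katz--Messing comparison translates such a class into an element of $H^{2j}_{\mathrm{cris}}(X)_{\Q_p}$ of Hodge filtration level $\geq j$ fixed by a power of crystalline Frobenius. I would then try to use integral $p$-adic Hodge theory (Fontaine--Laffaille, Breuil--Kisin, Bhatt--Morrow--Scholze) together with a motivic comparison to lift such a crystalline-Frobenius-fixed class to a class in $CH^j(X) \otimes \Q_\ell$. With $T$ established for $X$ and for $X \times X$, one then obtains $S(X)$ via the observability argument announced in the abstract: algebraic correspondences on $X \times X$ produce $G_K$-equivariant projectors onto arbitrary Galois-stable subspaces, so the representation splits.

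The hard part, unavoidably, is the construction of an actual algebraic cycle from a Frobenius-fixed crystalline class: this \emph{is} the entire content of the Tate conjecture, and no general mechanism is known. In the cases that are settled (Tate's theorem on endomorphisms of abelian varieties, the $K3$ surface results of Nygaard, Ogus, Maulik, Madapusi-Pera and Charles, and certain Shimura varieties) the cycles are produced by very special geometric constructions — self-isogenies, Kuga--Satake lifts, Hecke correspondences — none of which generalise to arbitrary smooth projective $X$. Without a genuinely new source of algebraic cycles I do not see how to complete the argument in full generality; accordingly, I would expect the paper itself to stop short of the full conjecture and, as its abstract announces, to establish only the \emph{unconditional implication} from $T(X)$ to $S(X)$ via observability of arithmetic monodromy groups of geometric origin, complemented by the discussion of $p$-adic periods relevant to the still-open half $T(X)$.
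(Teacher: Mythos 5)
The statement you were asked to prove is the Tate conjecture itself, and the paper states it as a \emph{conjecture}: it neither claims nor contains a proof, so there is no proof in the paper to compare your attempt against. You recognize this yourself in your closing paragraph, and your reading of the paper's actual goal --- an unconditional observability statement yielding the implication from $T$ to $S$ --- is essentially correct.

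Two specific points in your heuristic sketch are nevertheless off. First, the reduction of $S(X)$ and $T(X)$ over a finitely generated field to the case of a finite field is not a ``standard'' spreading-out step: specialization maps on Tate classes need not be surjective, semisimplicity does not pass freely along specialization, and in characteristic zero one cannot reach finite fields by specialization without changing the characteristic --- one reason why the paper treats $p=0$ and $p>0$ by genuinely different arguments in Theorem \ref{T2}. Likewise, Katz--Messing matches characteristic polynomials of Frobenius but does not transport a Frobenius-fixed $\ell$-adic class to a crystalline class lying in the expected step of the Hodge filtration; that is itself a deep open problem, not a corollary of integral $p$-adic Hodge theory. Second, your proposed mechanism for $T\Rightarrow S$ --- ``algebraic correspondences on $X\times X$ produce $G_K$-equivariant projectors onto arbitrary Galois-stable subspaces, so the representation splits'' --- begs the question: the existence of such a projector onto a stable subspace $W$ \emph{is} the assertion that $W$ admits a stable complement. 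The paper's route (Corollary \ref{cor1}) is instead to show that $T(X^n)$ for all $n$ forces $V^{H_\ell}=V^{G_{mot,\ell}}$ in every $G_\ell$-representation $V$, then to invoke the \emph{unconditional} observability of $H_\ell$ (Theorem \ref{T2}) together with Lemma \ref{L1} to conclude $H_\ell=G_{mot,\ell}$, and finally to use the reductivity of $G_{mot,\ell}$ in characteristic zero to get $S(X)$. The $p$-adic Hodge theory (Hodge--Tate characters are potentially Tate twists) enters in establishing observability, not in fabricating projectors.
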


($S(X)$ is also attributed to Grothendieck-Serre, and is equivalent to: the Zariski closure $H_\ell$ of the image of $G_K$ in $GL(H^\ast(\bar X, \Q_\ell))$ is reductive.)
 
 \begin{qn} What is the relation between $T(X)$ (invariants) and $S(X)$ (semisimplicity)? \end{qn}

 A surprising partial answer was recently given by B. Moonen \cite{M19}: 

 \smallskip {\it if $p=0,\,$   $T(X)$ for all smooth projective varieties $X$ over all finite extensions of $K$ implies $S(X)$ for all such $X$.\,}

  \medskip In this note, we show that behind this result, there is an inconditional fact, which is valid in any characteristic (Theorem \ref{T2}): 
  
  \smallskip {\it The Zariski closure $H_\ell$ of the image of $G_K$ in $GL(H^\ast(\bar X, \Q_\ell))$ is the largest closed subgroup which fixes $\sT^\ast(X^n)$ for all $n$. } 
  
 \smallskip  This can be translated into: {\it $H_\ell$ is an {\it observable} subgroup of $GL(H^\ast(\bar X, \Q_\ell))$} (in the sense of Bialynicki-Birula, Hochschild and Mostow). We actually use the theory of observability to prove the above fact - which holds in greater generality: one may replace $X$ by {\it any mixed motive} over $K$ (in the sense of Nori). 
  
  On the other hand, when $p=0$, one has an inconditional motivic Galois theory, which provides reductive motivic Galois groups in the pure case (in various guises, but all equivalent). It is then easy to deduce from the previous theorem a refined version of Moonen's theorem (Corollary \ref{cor1}): 
  
 \smallskip \centerline{\it if $p=0,\,$ $T(X^n)$ for all  $n\,$ implies $S(X)$.} 
  
    \medskip In the last section, we consider another aspect of the Tate conjecture when $K$ is a number field, which is related to the $p$-adic periods (depending on a chosen embedding $\iota_p$ of $\bar K$ in $\bar \Q_p$). Clarifying a result of \cite{A90} in terms of motivic Galois groups, we establish a 
    
    {\it $p$-adic analog of the Grothendieck period conjecture for $X$ if $T(X^n)$ holds for all  $n\,$ and $\iota_p$ is sufficienty general}.
    
    \medskip The use of $p$-adic Hodge theory somehow relates that section to the previous one (in the proof of Moonen's theorem as well as in the proof of Theorem \ref{T2}, the main ingredient is that $p$-adic representations coming from geometry are Hodge-Tate).

 \section{Observability}
   
 Let  $G $ be a linear algebraic group  (= a smooth affine group scheme of finite type) 
 over a field $F$, and  
  let $H\subset G$ be a closed linear algebraic subgroup. 
   
  By ``representation" $V$ of $H$ or $G$, we mean a finite-dimensional rational $F$-linear representation. We denote by $V^H$ the subspace of $V$ where $H$ acts trivially.
    
    \begin{defn}[Bialynicki-Birula, Hochschild, Mostow] 
    {  $H$ is} {\it observable} {in $G\,$ if every representation of $H$ is contained in a representation of $G$.} 
    \end{defn}

 \begin{thm}[Hochschild, Mostow, Grosshans]\cite[1.10]{Gr97}\label{T1}  The following conditions are equivalent:
 
 \smallskip  { $1)\;$ $H$ is observable in $G$,}
          
 \smallskip  { $2)\;$  every character of $H$ occurs\footnote{\ie is contained} in a representation of $G$,} 
 
 \smallskip  { $3)\;$  for every character of $H$ occuring in a representation of $G$, its inverse also occurs in a representation of $G$,} 
  
 \smallskip  { $4)\;$  $H$ is the isotropy group of some element in some representation of $G$,}
   
  \smallskip  { $5)\;$  $G/H$ is quasi-affine (i.e. open in some affine $F$-variety),}
  
  \smallskip  { $6)\;$  $H^0$ is observable in $G^0$.}
  \end{thm}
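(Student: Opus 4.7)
My plan is to go around the cycle $(1) \Rightarrow (2) \Rightarrow (3) \Rightarrow (4) \Rightarrow (5) \Rightarrow (1)$ and to handle $(1) \Leftrightarrow (6)$ separately. The first two implications are immediate: characters are one-dimensional representations, and (2) trivially implies (3) since it already covers every character, a fortiori the inverses of those occurring in a representation of $G$.

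The key geometric input for $(3) \Rightarrow (4)$ is Chevalley's theorem, which realizes any closed subgroup $H \subset G$ as the stabilizer of some line $L$ in a representation $V$ of $G$. The action of $H$ on $L$ is a character $\chi$, which automatically occurs in $V|_H$, hence in a representation of $G$. Applying (3), I obtain a representation $W$ of $G$ containing a nonzero vector $w$ on which $H$ acts by $\chi^{-1}$; then for any nonzero $v \in L$, the tensor $v \otimes w \in V \otimes W$ is $H$-fixed, and I claim $\mathrm{Stab}_G(v \otimes w) = H$. Indeed, $g(v \otimes w) = v \otimes w$ with $v, w \neq 0$ forces, by the structure of decomposable tensors, $gv = \lambda v$ and $gw = \lambda^{-1} w$ for some scalar $\lambda$, so $g$ stabilizes $L$ and lies in $H$. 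For $(4) \Rightarrow (5)$, the orbit map identifies $G/H$ with the locally closed orbit $G \cdot v \subset V$, which is quasi-affine as a locally closed subvariety of the affine space $V$.

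The genuinely technical step is $(5) \Rightarrow (1)$, and this is where I expect the main obstacle. I would proceed through the dictionary between $H$-representations and $G$-equivariant quasi-coherent sheaves on $G/H$. Given a finite-dimensional representation $W$ of $H$, form the induced $G$-module $\mathrm{Ind}_H^G W = (F[G] \otimes W)^H$ of $H$-equivariant regular maps $G \to W$; evaluation at the identity provides an $H$-linear map to $W$. The crucial input, whose proof uses the quasi-affineness of $G/H$ and a lemma guaranteeing enough global sections of the associated vector bundle $G \times^H W$ over the quasi-affine base, is that this evaluation is surjective. Then $W$ embeds as an $H$-submodule of the locally finite $G$-module $\mathrm{Ind}_H^G W$, hence into a finite-dimensional $G$-submodule, yielding (1). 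This is the point at which Grosshans' machinery around invariants on quasi-affine homogeneous spaces is doing the real work.

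Finally, $(1) \Leftrightarrow (6)$ is a book-keeping argument relying on the finiteness of $H/H^0$ and $G/G^0$: any rational representation of $H^0$ is an $H^0$-summand of the restriction of its induced $H$-representation, which embeds in a $G$-representation by (1) and then restricts to a $G^0$-representation containing the original, yielding (6); the converse proceeds symmetrically by inducing a suitable $G^0$-representation up to $G$ and checking $H$-equivariance coset by coset.
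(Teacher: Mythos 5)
Your outline is sound, and the one implication you prove in full detail, $3)\Rightarrow 4)$, is essentially identical to the argument the paper itself supplies: Chevalley's theorem gives $H$ as the stabilizer of a line $L\subset V$, condition $3)$ gives a $G$-representation $W$ containing the inverse character, and the injectivity of the Segre map (your ``structure of decomposable tensors'') shows that the isotropy group of $\ell\otimes\ell'$ is exactly $H$. The difference in scope is worth noting: the paper does not prove the whole theorem but cites Grosshans, and only writes out $3)\Rightarrow 4)$ because that is the one link in the chain not already available in the literature over a non-algebraically-closed base field ($4)\Rightarrow 1)$ is in Thang--Bac, and $1)\Rightarrow 2)+3)$ is trivial). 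Your more ambitious cycle $1)\Rightarrow 2)\Rightarrow 3)\Rightarrow 4)\Rightarrow 5)\Rightarrow 1)$ plus $1)\Leftrightarrow 6)$ is the structure of Grosshans' own treatment, but you do not actually discharge the hard step $5)\Rightarrow 1)$: you correctly locate the difficulty in the surjectivity of evaluation for induced modules over a quasi-affine $G/H$, and then defer to ``Grosshans' machinery.'' That is a legitimate citation, but it means your proof is no more self-contained than the paper's at the one place where real work remains.

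One small inaccuracy in your sketch of $5)\Rightarrow 1)$: surjectivity of the evaluation map $\mathrm{Ind}_H^G W=(F[G]\otimes W)^H\to W$ exhibits $W$ as an $H$-module \emph{quotient} of (a finite-dimensional $G$-submodule of) the induced module, not as a submodule; evaluation at the identity is the counit of the adjunction and there is no canonical $H$-splitting. To get the embedding required by the definition of observability you must apply the surjectivity statement to $W^\vee$ and dualize. This is routine, but as written the sentence ``$W$ embeds as an $H$-submodule of $\mathrm{Ind}_H^G W$'' does not follow from what precedes it. The $1)\Leftrightarrow 6)$ book-keeping via finite induction along $H/H^0$ and $G/G^0$ is fine in outline, though the direction $6)\Rightarrow 1)$ is cleaner if run through condition $5)$ (comparing quasi-affineness of $G/H$ and $G^0/H^0$) rather than through a Mackey-type decomposition.
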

     
\begin{rem}  
 Most expositions of this theorem (\eg \cite{Gr97}) assume that $F$ is algebraically closed, but a close look at the proofs shows that this assumption is unnecessary. 
  \cite{TB05} deals specifically with observability over non-algebraically closed fields and proves that $H$ is observable in $G$ if and only if $H_{K'}$ is observable in $G_{K'}$ for any extension $K'/K$ - but, strangely,  does not deal with characters. Since item $3)$ is fundamental in the sequel, we give a short and elementary proof that $3)\Rightarrow 4)$ (since $4) \Rightarrow 1)$ is detailed in \cite{TB05} and $1) \Rightarrow 2) + 3)$ is trivial, this is the only point to be justified for $2)$ and $3)$ over a general field $F$). 
 
 By Chevalley's theorem, $H$ is the stabilizer of a line $L$ in a representation $V$ of $G$. If, according to $3)$, the dual $L^\vee$, viewed as a representation of $H$, is contained in a $G$-representation $W$, then $L\otimes L^\vee$ is contained in the representation $V\otimes W$ and any non-zero element $\ell\otimes \ell'$ is fixed by $H$. Let $H'\supset H$ be the isotropy group of $\ell\otimes \ell'$. Since the Segre map $\P(V)\times \P(W)\to \P(V\otimes W)$ is an embedding,  $H'$ acts by homotheties on $\ell$ and $\ell'$, hence $H=H'$, which gives $4)$.  
     \end{rem}  

 \begin{exs}  $i)$ Any normal subgroup is observable (by condition $5)$).
 Conversely, if $H$ is observable in $G$ and if for any $G$-representation $V$, 
 $V^H$ is $G$-stable, then $H$ is normal in $G\,$  (Esnault-Hai-Sun \cite{EHS}; \cf also \cite[C.3]{A21}).

 \medskip $ii)$ A Borel subgroup $B$ of a connected (quasi-split) reductive group $G$ is not observable  unless $G$ is a torus (by condition $5)$ again). 
 
As a special case, the standard $SL_2$-representation $V$ becomes, by restriction to the subgroup $B$ of upper triangular matrices, an extension of a character by its inverse; the first one is not contained in a $SL_2$-representation, so that condition $3)$ is violated.

 \medskip $iii)$ Condition $3)$ is automatic for characters of finite order.
  \end{exs}
 
Observable subgroups are those ``detected" by Invariant Theory, whence the name:  

\begin{lemma}\label{L1} If $H$ is observable in $G$ and if for any $G$-representation $V$, one has $V^H = V^G$, then $H=G\,$ (the converse is obvious).\end{lemma}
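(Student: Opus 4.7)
My plan is to invoke characterization $4)$ from Theorem \ref{T1}: since $H$ is observable in $G$, there exist a $G$-representation $V$ and an element $v\in V$ such that $H$ is exactly the isotropy group of $v$ in $G$. In particular $v\in V^H$.

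Now the hypothesis $V^H=V^G$ kicks in and forces $v\in V^G$, so every element of $G$ fixes $v$. But the isotropy group of $v$ in $G$ is $H$, hence $G\subseteq H$, and therefore $H=G$. The converse direction is trivial since $V^G\subseteq V^H$ is automatic and the reverse inclusion is immediate from $H=G$.

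The key step is really the use of condition $4)$ rather than, say, condition $5)$ or invariant-theoretic separation arguments: once the observability is translated into the existence of a witness vector whose stabilizer is exactly $H$, the argument collapses to one line. There is no serious obstacle here; the only point to be careful about is that the witness $v$ provided by $4)$ is a genuine element of $V$ (not merely of a line up to scalars), so that $H$-fixedness of $v$ is literally what we need to compare with $G$-fixedness via the hypothesis $V^H=V^G$. This is exactly what the proof of $3)\Rightarrow 4)$ in the preceding remark establishes (the vector constructed there, $\ell\otimes\ell'$, is fixed by $H$ and has stabilizer equal to $H$).
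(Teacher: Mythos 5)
Your proof is correct, and it is genuinely different from the one in the paper. You reduce everything to condition $4)$ of Theorem \ref{T1}: observability hands you a $G$-representation $V$ and a vector $v$ with $\mathrm{Stab}_G(v)=H$, so $v\in V^H=V^G$ forces $G\subseteq \mathrm{Stab}_G(v)=H$, and you are done in one line. The paper instead argues tannakianly: the hypothesis $V^H=V^G$ for all $V$ (applied to internal Homs) says that the restriction functor $Rep\,G\to Rep\,H$ is fully faithful, while observability says every $H$-representation $W$ sits as a subobject of some $G$-representation and (dually) as a quotient of another; full faithfulness then makes the composite $V'\surj W\inj V$ a $G$-morphism, so $W$ itself is a $G$-representation, the restriction functor is an equivalence, and $H=G$ by tannakian duality. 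Your argument is shorter and more elementary, trading on the ``witness vector'' characterization; the paper's version is chosen to showcase the tannakian reading of observability that the author comments on immediately afterwards (the remark contrasting the tannakian viewpoint with the classical adjunction/transfer principle), and it makes visible \emph{why} the hypothesis $V^H=V^G$ is exactly full faithfulness. Both are complete proofs; your one caveat about the witness $v$ being a genuine vector (not just a line) is well taken and indeed settled by the displayed proof of $3)\Rightarrow 4)$.
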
 
 
 \begin{proof}  ``$V^H= V^G$ for all $V$" means that $Rep\, G \rightarrow Rep\,H$ is fully faithful. ``$H$ observable" means that any $H$-representation $W$ is contained in a $G$-representation $V$, and dually is a quotient of a $G$-representation $V'$. By full faithfulness, the composition $V'\surj W \inj V$ is a $G$-morphism, thus $W$ is a $G$-representation. Hence $Rep\, G \rightarrow Rep\,H$ is essentially surjective. \end{proof}
   
 \begin{rems} $i)$ The condition $V^H = V^G$ is satisfied for a Borel subgroup $H$ of a reductive group $G$. 
 
\smallskip $ii)$  Traditionally, observability theory is not based on the tannakian viewpoint, but on the old ``adjunction/transfer principle" (valid for any closed subgroup $H\subset G$):
  $$(F[G/H]\otimes V)^G= V^H,$$
  where the action $G$ on $F[G/H]$ is by left translation. 
  
  However these two points of view are essentially equivalent, replacing $F[G]$ by the collection of representations of $G$ or, if $G$ is reductive, by the collection of mixed tensor spaces $V^{\otimes m} \otimes V^{\vee \,\otimes n}$ on a faithful $G$-representation $V$  (since $G$-repre\-sentations are contained in direct sums of mixed tensor spaces built on $V$).\end{rems}

 \begin{lemma}\label{L2} Let $V$ be a faithful representation of $G$, and let $H'$ be the largest closed subgroup of $G$ which fixes the $H$-invariants in every $G$-representation. Then $H'$ is the smallest observable subgroup of $G$ containing $H$.  In particular, $H$ is observable in $G$ if and only if $H=H'$.\end{lemma}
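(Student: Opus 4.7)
My plan is to verify three claims, from which the final characterization follows at once: $(a)$ $H \subset H'$; $(b)$ $H'$ is observable in $G$; $(c)$ any observable subgroup $H''$ of $G$ containing $H$ also contains $H'$. Granting these, if $H=H'$ then $H$ is observable by $(b)$; conversely, if $H$ is observable, applying $(c)$ with $H''=H$ gives $H'\subset H$, whence $H=H'$.

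Claim $(a)$ is trivial since $H$ pointwise fixes its own invariants. For $(b)$, I rewrite
\[ H' \;=\; \bigcap_{W} \mathrm{Stab}_G(W^H), \]
where $W$ ranges over finite-dimensional $G$-representations and $\mathrm{Stab}_G$ denotes the pointwise stabilizer. Since $G$ is Noetherian as an affine scheme of finite type, this descending intersection of closed subgroups is already attained by finitely many representations $W_1,\ldots,W_k$. Picking a basis $\{w_{i,1},\ldots,w_{i,d_i}\}$ of each $W_i^H$ and bundling these vectors into a single $w := (w_{i,j})_{i,j}$ inside the $G$-representation $\bigoplus_{i=1}^k W_i^{\oplus d_i}$, one has $\mathrm{Stab}_G(w) = \bigcap_i \mathrm{Stab}_G(W_i^H) = H'$. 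Thus $H'$ is the isotropy group of a vector in a $G$-representation, and condition $4)$ of Theorem \ref{T1} yields observability.

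For $(c)$, condition $4)$ of Theorem \ref{T1} applied to $H''$ produces a $G$-representation $W'$ and $w' \in W'$ with $H'' = \mathrm{Stab}_G(w')$. Since $H \subset H''$, the vector $w'$ is $H$-invariant; and by definition, $H'$ fixes every element of $(W')^H$. Hence $H'$ fixes $w'$, giving $H' \subset H''$.

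The main obstacle is the Noetherian reduction in $(b)$: the defining intersection of $H'$ ranges over all $G$-representations, whereas condition $4)$ of Theorem \ref{T1} furnishes only one. Noetherianity of $G$ truncates the intersection to finitely many terms, and the standard trick of bundling finitely many invariant vectors into a single one lands us in condition $4)$; the rest is formal manipulation of the criteria already collected in Theorem \ref{T1}. The faithful representation $V$ appearing in the statement plays no explicit role in the argument above, but it allows one to restrict in $(b)$ to tensor spaces built on $V$ whenever $G$ is reductive.
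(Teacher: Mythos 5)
Your proposal is correct, and its central step coincides with the paper's: both arguments show that $H'$ is observable by using Noetherianity of $G$ to cut the defining intersection down to finitely many invariant vectors, bundling these into a single vector of a $G$-representation, and invoking condition $4)$ of Theorem \ref{T1} (the paper phrases this via the $H$-invariant functions in $F[G]$ and its finite-dimensional subrepresentations, you via the stabilizers $\mathrm{Stab}_G(W^H)$ over all $W$ — the same thing in substance). Where you genuinely diverge is the minimality step: the paper deduces it from Lemma \ref{L1}, observing in a footnote that an observable subgroup $H''$ with $H\subset H''\subset H'$ must equal $H'$; strictly speaking this only treats observable subgroups \emph{contained in} $H'$, and to conclude that $H'$ is contained in an arbitrary observable $H''\supset H$ one would still need, e.g., that intersections of observable subgroups are observable. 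Your claim $(c)$ — writing $H''=\mathrm{Stab}_G(w')$ by condition $4)$, noting $w'\in (W')^H$, and concluding $H'\subset H''$ — is more direct, avoids Lemma \ref{L1} entirely, and covers arbitrary observable overgroups of $H$ in one stroke. Your closing remark that the faithful representation $V$ plays no role in the argument is also accurate; it only matters for the variant where one restricts to mixed tensor spaces on $V$.
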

 
 \begin{proof} Clearly $H\subset H'$. By the previous lemma, it suffices to show that $H'$ is observable\footnote{if $H''\subset H'$ is another observable subgroup (in $G$ hence in $H'$) containing $H$, the previous lemma applied to $H''\subset H'$ shows that $H''=H'$}.
Note that $H'$ is the closed subgroup of $G$ of elements fixing the $H$-invariant functions $f\in F[G]$. Since $H'$ is of finite type, finitely many functions $f_i$ suffice,  $H'$ is the isotropy group of $\oplus f_i$ (in a direct sum of subrepresentations of $F[G]$), and one concludes by condition $4)$. \end{proof}

\section{Observability and the Tate conjecture.} 

\subsection{Observability of Galois representations.} Let us come back to the initial situation, where $X$ is a smooth projective variety over a finitely generated field $K$.
 
  \smallskip Let us set $G_\ell  = GL(H^\ast(\bar X, \Q_\ell))$ and denote by $H_\ell\subset G_\ell$ the Zariski closure of the image of $G_K$.
   
  \medskip
 \begin{thm}\label{T2}  
  { $H_\ell $ is observable in $G_\ell $.  Equivalently\footnote{by Lemma \ref{L2} and the remark just before it}, $H_\ell$ is the largest subgroup of $G_\ell$ which fixes $\sT^\ast(X^n)$ for all $n$.}\end{thm}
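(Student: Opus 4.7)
The plan is to verify criterion~(3) of Theorem~\ref{T1}: for every character $\chi$ of $H_\ell$ appearing as a sub-representation of some $G_\ell = GL(V)$-representation (with $V = H^\ast(\bar X, \Q_\ell)$), I need to show that $\chi^{-1}$ also so appears. Granting this, observability of $H_\ell$ in $G_\ell$ follows at once. The equivalence with the ``Tate classes'' reformulation is then Lemma~\ref{L2} combined with the Künneth isomorphism $V^{\otimes N}\simeq H^\ast(\bar X^N,\Q_\ell)$, Poincar\'e duality $V^\vee \simeq V(\dim X)$, and the embedding $\Q_\ell(-1)\hookrightarrow V$ (via a hyperplane class in $H^2$): together these identify $H_\ell$-invariants in mixed tensors on $V$ with $\sT^\ast(X^n)$, with all Tate twists already accessible inside the category of mixed tensor constructions on $V$.

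Under the same identifications, a character $\chi$ of $H_\ell$ occurs as a sub in some $G_\ell$-representation iff $\chi\hookrightarrow H^i(\bar X^N,\Q_\ell)(j)$ for some $N,i,j$. Dualizing this embedding and applying Poincar\'e duality on $\bar X^N$ yields a surjection $H^{2Nd-i}(\bar X^N,\Q_\ell)(Nd-j)\twoheadrightarrow \chi^{-1}$, so $\chi^{-1}$ automatically occurs as a \emph{quotient} of a mixed tensor on $V$. The heart of the proof is therefore to upgrade this ``quotient'' to a ``sub-representation'' of some, possibly different, mixed tensor.

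For this upgrade I would invoke the Hodge--Tate property of $p$-adic étale cohomology of smooth projective varieties, as flagged in the introduction. In characteristic $0$, specializing to $\ell = p$ and restricting to a decomposition group $G_{K_v}$ at a place $v\mid p$, every 1-dim $G_K$-sub $\chi$ of a geometric mixed tensor becomes Hodge--Tate; by Sen--Tate it is therefore of the form (finite-order character)$\otimes$(integer power of the cyclotomic character). Such ``Artin--Tate'' characters are geometrically rigid: both $\chi$ and $\chi^{-1}$ arise as sub-representations of explicit geometric cohomologies (finite-order part realized via a Kummer or cyclic cover of a suitable $X^{N'}$, cyclotomic part via iterated hyperplane classes), and transferring back from $G_{K_v}$ to $G_K$ realizes $\chi^{-1}$ as a sub of some mixed tensor on $V$. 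In positive characteristic the analogue uses Deligne's purity theorem in place of $p$-adic Hodge theory. The main obstacle is precisely this upgrade step: without global semisimplicity of $G_K$-representations (which is exactly the unknown conjecture $S(X)$) one cannot convert arbitrary quotients into sub-representations, and the argument proceeds only by exploiting the finer Artin--Tate classification of 1-dim geometric characters supplied by $p$-adic Hodge theory.
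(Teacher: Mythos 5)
Your overall strategy---verify criterion $3)$ of Theorem \ref{T1}, and control the one-dimensional $H_\ell$-subquotients of geometric representations via $p$-adic Hodge theory---is the right one, and is indeed the strategy of the paper. But the central step of your ``upgrade'' contains a genuine gap. The Sen--Tate classification of Hodge--Tate characters only tells you that the \emph{restriction} $\chi|_{G_{K_v}}$ to a decomposition group at $v\mid p$ is, on an open subgroup, an integer power of the cyclotomic character. It does \emph{not} follow that the global character $\chi$ of $G_K$ is of the form (finite-order)$\otimes$(cyclotomic power), and your phrase ``transferring back from $G_{K_v}$ to $G_K$'' hides exactly the missing local-to-global argument: observability is a statement about the full group $H_\ell$, and knowing that $\chi^{-1}$ occurs after restriction to a decomposition subgroup gives nothing about $\chi^{-1}$ occurring as a character of $H_\ell$ itself. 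The claimed ``Artin--Tate rigidity'' is in fact \emph{false} over a general number field $K$: if $X$ is a CM elliptic curve in which $\ell$ splits, $H^1(\bar X,\Q_\ell)$ splits as a sum of two characters of $H_\ell$ which are Hodge--Tate but are not potentially Tate twists (the paper flags precisely this in the remark following Theorem \ref{T2}).

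The missing idea is the reduction to $K=\Q$: by Serre's specialization argument (number fields are Hilbertian) one may assume $K$ is a number field, and by inducing the representation from $G_K$ to $G_\Q$ (together with conditions $5)$ and $6)$ of Theorem \ref{T1}) one reduces to characters $\chi\colon G_\Q\to\Z_\ell^\times$. Only over $\Q$ does Kronecker--Weber convert the local Hodge--Tate information into the global statement that $\chi$ is potentially a Tate twist, after which criterion $3)$ is immediate (it is automatic for finite-order characters, so no geometric realization of the finite part by Kummer covers is needed). Your positive-characteristic paragraph is also too vague to carry weight: ``Deligne's purity'' is not the relevant input; one needs either the functional equation of the zeta function over a finite field combined with a reduction via Hayes' explicit class field theory for $\F_q(T)$, or Deligne's theorem that characters of the \emph{geometric} monodromy group have finite order, followed by specialization to a rational point.
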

  
 \begin{proof} I - The case $p = 0$.

  \smallskip
$a)$ ({\it Reduction to the case $K = \Q$}). By a standard specialization argument due to Serre \cite[\S 6.4]{S94}\cite[p. 149]{S97}, using the fact that number fields are Hilbertian, one reduces to the case where $X$ is defined over a number field $K$, which we may assume to be Galois over $\Q$ (the groups $H_\ell$ and $G_\ell$ being unchanged). 

Let $X'$ be $X$ considered as a $\Q$-scheme, and $H'_\ell \subset G'_\ell$ the corresponding groups for $X'/\Q$. The $G_\Q$-representation $H^\ast(\bar X', \Q_\ell)$ is induced from the $G_K$-representation $V_\ell = H^\ast(\bar X, \Q_\ell)$. Therefore its restriction to $G_K$ is the direct sum of conjugates of $V_\ell$, one may view $G_\ell$ as a subgroup of $G'_\ell$, $H^0_\ell$ as the image of $H'^0_\ell$ in $G_\ell$, and $G_\ell/H^0_\ell$ as a closed subspace of $G'_\ell/H'^0_\ell$. By conditions $5)$ and $6)$ in Theorem \ref{T1},  one concludes that $H_\ell$ is observable in $G_\ell$ if $H'_\ell$ is observable in $G'_\ell$\footnote{alternatively: if $H'_\ell$ is observable in $G'_\ell$, it is observable in the subgroup $G''_\ell := \Pi GL(V_\ell^\sigma)$. Since $G_\ell $ is a quotient of $G''_\ell$, and $H^0_\ell$ is the image of $H'^0_\ell$, one may conclude using conditions $2)$ and $6)$ in Theorem \ref{T1}}.
   
 \smallskip
 $b)$ {\it(Moonen's argument)}.  A character of $H_\ell$ contained in a $G_\ell$-representation $V$  comes from a character  $\chi  : G_\Q \rightarrow \Z_\ell^\times$.   
 
   One may assume that $\chi$ is infinite. By {\it Kronecker-Weber}, the extension $L/\Q$ corresponding to $\Ker\, \chi$ contains the cyclotomic $\Z_\ell$-extension $\Q_\infty$ with finite index. As $\ell$ is totally ramified in $\Q_\infty$, the inertia $I\subset Gal(L/\Q)$ at a prime above $\ell$ is open.  
 
  Since $V$ is of geometric origin, it is {\it Hodge-Tate} \cite{F}\footnote{this condition passes to Galois subrepresentations}; hence so is $\chi$, and there is an open subgroup $J\subset I$ such that $\chi_{\mid J}$ is an integral power of the cyclotomic character \cite[3.9]{Fo}. 
 
 \smallskip $c)$ ({\it Conclusion}). Therefore $\chi$ is potentially a Tate twist, \ie the realization of a power of the Tate motive. Such a Tate twist is a $G_\ell$-representation, and one may assume after twisting $V$ that $\chi$ is of finite order. One concludes by condition $3)$ in Theorem \ref{T1}. 

  One also concludes (by condition $2)$) that {\it any character of $H_\ell$} (not supposed a priori to be contained in a $G_\ell$-representation) {\it is potentially a Tate twist}.\end{proof}
    
  \begin{rem} Reduction to $K=\Q$ is essential in steps $b)$ and $c)$: 
    if $X$ is an elliptic curve with complex multiplication in which $\ell$ splits, $H^1(\bar X, \Q_\ell)$ is the direct sum of two one-dimensional $H_\ell$-representations which are not potentially Tate twists. \end{rem}

\begin{proof} II - The case $p >0$.

If $K=\F_q$ is finite, the functional equation of the zeta function of $X$ shows that if $\alpha$ is a Frobenius eigenvalue, so is $q^n/\alpha$ for some $n$. Hence if a character of $H_\ell$ occurs in a $G_\ell$-representation, so does its inverse. Therefore $H_\ell$ is observable in $G_\ell$ in this case. 

\smallskip
  If $K$ has transcendence degree $>0$, we give two proofs. 

  \medskip
1) The first proof parallels the case $p=0$.  

\smallskip $a')$ A specialization argument \`a la Serre, using the fact that function fields are Hilbertian, reduces to the case of transcendence degree $1$. One further reduces to the case $K= {\bf F}_q(T)$ as in step $a)$ above. 

\smallskip $b')$ One emulates Moonen's argument, replacing
Kronecker-Weber by {\it Hayes' theorem} \cite{H94}, which describes abelian extensions of $K= {\bf F}_q(T)$: they are compositions of 

- Carlitz-cyclotomic extensions associated to closed points of ${\bf P}^1_{{\bf F}_q}$, and 

- extensions of $K$ which come from extensions of ${\bf F}_q$. 

  \smallskip\noindent (Under Carlitz' representation: ${\bf F}_q[T] \rightarrow End_{ab.gp}(\bar K)$,
 $\; P \mapsto $ separable polynomial $\tilde P$ with coefficients in ${\bf F}_q[T]$,  
the roots of $\tilde P\,$ (\ie the $P$-torsion points of $\bar K$) generate an abelian extension of $K = {\bf F}_q(T)$.)

\smallskip 
Carlitz-cyclotomic extensions, when infinite cyclic, have infinite wild ramification at a suitable prime. 
In our situation, $Gal(L/K) = Im\, \chi \subset \Z_\ell^\times$, hence $L$ is potentially cyclic and tamely ramified everywhere. Therefore $L$ comes from a representation of the absolute Galois group of a finite extension of ${\bf F}_q$. 

\smallskip $c')$ By suitable specialization of $T$, one then reduces to case when $K$ is finite.  $\square$

\medskip 2) For the second proof, one views $K$ as the function field of some variety $S/{\bf F}_q$ and $X$ as the generic fiber of $\sX/S$, and uses the fact that characters of the geometric monodromy are of finite order (Deligne \cite{D80}). By condition $6)$ in Theorem \ref{T1}, one may replace $K$ be a finite separable extension, hence assume that the geometric monodromy group $H_{geom}$ has no non-trivial character, and that $S$ has an $\F_q$-rational point $s$ at which $X$ has good reduction. If $L$ is a $1$-dimensional $H_\ell$-representation  contained in a $G_\ell$-representation, $H_{geom}$ acts trivially on $L$, therefore $L$ comes from a representation of the Galois group of ${\bf F}_q$ and by specialization at $s$, one reduces to the case when $K$ is finite\footnote{one finds an argument of similar flavor in \cite[T. 7]{K04}}. \end{proof}
 
  \begin{rems} $i)$ The theorem extends, with the same proof, to Galois representations attached to Nori  {\it mixed motives} ($p$-adic realizations of mixed motives are discussed in detail in \cite[\S 4]{DN}\footnote{however we are not aware of a written exposition of the theory of Nori motives in positive characteristic}).
  
  $ii)$ In \cite{A21}, observability was used in the context of variations of mixed Hodge structures instead of Galois representations. 
  \end{rems} 
 
\subsection{A refinement of Moonen's theorem.} 
  \begin{cor}\label{cor1} 
 {  Assume that $\Char K =0$. Then $T(X^n)$ for all  $n\,$ implies $S(X)$.\,} \end{cor}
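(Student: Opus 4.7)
The plan is to identify $H_\ell$ with the $\ell$-adic realization of a reductive motivic Galois group and then conclude by Theorem~\ref{T2}. Recall that, via the reductive-group reformulation stated just after the Tate conjecture in the Introduction, $S(X)$ is the same as the assertion that $H_\ell$ is reductive.

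In characteristic zero one has an unconditional tannakian framework producing reductive motivic Galois groups in the pure case. I would take $\langle X\rangle$ to be the tannakian subcategory generated by the motive of $X$ inside André's category of motives modulo motivated correspondences, and let $G_{\mathrm{mot}}(X)$ be its motivic Galois group with respect to $\ell$-adic cohomology. Then $G_{\mathrm{mot}}(X)$ is reductive, every motivated class is $G_{\mathrm{mot}}(X)$-invariant by construction, and the Galois action on $H^\ast(\bar X,\Q_\ell)$ factors through $G_{\mathrm{mot}}(X)_{\Q_\ell}$. Taking Zariski closure of the image of $G_K$ then gives a chain of closed subgroups of $G_\ell$:
\[ H_\ell\;\subset\; G_{\mathrm{mot}}(X)_{\Q_\ell}\;\subset\; G_\ell. \]

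The heart of the argument is to upgrade the left inclusion above to an equality using $T(X^n)$. Every algebraic class on $X^n$ is motivated, so the image of $CH^\ast(X^n)\otimes\Q_\ell$ in $\bigoplus_j H^{2j}(\bar X^n,\Q_\ell(j))$ lies in the space of $G_{\mathrm{mot}}(X)_{\Q_\ell}$-invariants. Assuming $T(X^n)$, this image already exhausts $\sT^\ast(X^n)$, so $G_{\mathrm{mot}}(X)_{\Q_\ell}$ fixes $\sT^\ast(X^n)$ for every $n$. By Theorem~\ref{T2}, the largest closed subgroup of $G_\ell$ with this property is $H_\ell$, forcing $G_{\mathrm{mot}}(X)_{\Q_\ell}\subset H_\ell$; combined with the chain above, $H_\ell=G_{\mathrm{mot}}(X)_{\Q_\ell}$. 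Reductivity of the right-hand side then transfers to $H_\ell$, which is exactly $S(X)$.

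The step I expect to require the most care is setting up the motivic side so that the three properties actually used in the proof hold unconditionally: reductivity of $G_{\mathrm{mot}}(X)$, compatibility of the $\ell$-adic fibre functor with the Galois action, and the inclusion of algebraic cycle classes into $G_{\mathrm{mot}}$-invariants on every $X^n$. André's motivated motives are tailor-made for exactly these requirements in characteristic zero, so once that framework is in place the deduction is a short tannakian matching with Theorem~\ref{T2}; no further $p$-adic Hodge-theoretic input is needed beyond what was already absorbed in the proof of that theorem.
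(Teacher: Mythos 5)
Your proof is correct and follows essentially the same route as the paper: both rest on the chain $H_\ell\subset G_{\mathrm{mot},\ell}\subset G_\ell$, the identification of $G_{\mathrm{mot},\ell}$-invariants with motivated classes, and the observability statement of Theorem~\ref{T2} to force $H_\ell=G_{\mathrm{mot},\ell}$, whence reductivity. The only (immaterial) difference is that you invoke the ``largest subgroup fixing $\sT^\ast(X^n)$ for all $n$'' reformulation of Theorem~\ref{T2} to get the reverse inclusion $G_{\mathrm{mot},\ell}\subset H_\ell$, whereas the paper applies Lemma~\ref{L1} directly to the observable inclusion $H_\ell\subset G_{\mathrm{mot},\ell}$; these are equivalent uses of the same observability input.
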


\begin{proof} Since $\Char K =0$, we have a tannakian semisimple $\otimes$-category of pure motives (with coefficients in $F= \Q_\ell$): it is defined, equivalently, using motivated cycles or using Nori's construction. In particular, we have a reductive subgroup $G_{mot, \ell}(X) \subset G_\ell = GL(H^\ast(\bar X, \Q_\ell))$, the {\it motivic Galois group} of $X$ (\cf \cite{A96}, and \cite{Ar13} for the equivalence with the setting of pure Nori motives). For any $G_\ell$-representation $V$, $V^{G_{mot, \ell}}$ consists of motivated classes. They are Tate classes, and algebraic classes\footnote{by which we understand $\Q_\ell$-linear combinations of fundamental classes of closed subvarieties, \ie elements of the image of the cycle class map $CH^\ast(X)\otimes \Q_\ell \rightarrow \sT^\ast(X)$.} are motivated. 

Therefore, 
 $H_\ell \subset G_{mot, \ell}$, and $T(X^n)$ for all $n$ implies $V^{H_\ell} = V^{G_{mot, \ell}}$. 

\smallskip Since $H_\ell$ is observable in $G_{mot, \ell}$ (being observable in the bigger group $G_\ell$ by Theorem \ref{T2}), one concludes that $H_\ell = G_{mot, \ell}$ by Lemma \ref{L1}. Since $G_{mot, \ell}$ is reductive, this gives $S(X)$. \end{proof}

 \begin{rem}
 If  $p >0$, a recent result by O'Sullivan provides a group with similar properties as $G_{mot, \ell}$ in characteristic $0$. Its invariants are ``fractionally algebraic classes", \ie classes $\xi$ in $\sT^\ast(X)$ for which there exists a projective smooth $Y/K$ and an algebraic class $\eta\in  \sT^\ast(Y)$ such that $ \xi\otimes \eta \in \sT^\ast(X \times Y)$ is algebraic (\cf \cite{O'S} and the forthcoming sequel). However, it is not known to be reductive, and one cannot derive as above the semisimplicity of the corresponding Galois representations.
\end{rem}

\medskip
 \section{Another remark on the Tate conjecture} 
 
 \subsection{Period isomorphism: the complex case.} We retain the same setting, but restrict to the case when $K$ is a number field ($\bar K$ being its algebraic closure in $\C$). 
  
  Instead of the etale cohomology of $X_{\bar K}$, we consider the De Rham cohomology $H^\ast_{dR}(X)$ and the Betti cohomology $H^\ast_B(X)$, which are related by  
 the period isomorphism
 $$\varpi: \; H^{\ast}_{dR}(X)(j)\otimes_K \C \cong H^{\ast}_B(X)(j)\otimes_\Q \C.$$  

There is a parallelism, which is expounded in \cite[Ch. 7]{A04}, between Tate's conjecture and a weak form of Grothendieck's period conjecture, according to which the analogs of Tate classes are what may be called {\it De Rham-Betti classes}\footnote{or {\it Grothendieck classes}, in view of the Grothendieck period conjecture below}, \ie classes $\xi \in  \oplus H^{2j}_{dR}(X)(j)$ such that $\varpi(\xi)\in \oplus H^{2j}_B(X)(j)$ (they have also been investigated in \cite{BC16} and elsewhere). 

 Let us denote by $\sG^\ast(X)$ the graded $\Q$-space of {\it De Rham-Betti classes}.

\begin{conj}[Grothendieck - weak form]
  $\,$
  
     \smallskip     $G(X):\,$ The cycle class map $CH^\ast(X) \rightarrow \sG^\ast(X)$ is surjective.  \end{conj}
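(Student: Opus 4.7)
The plan is to follow the playbook laid out for the Tate conjecture in Theorem \ref{T2} and Corollary \ref{cor1}, substituting De Rham-Betti data for $\ell$-adic data throughout. First I would set up a Tannakian framework: the pair $\bigl(H^\ast_{dR}(X), H^\ast_B(X)\bigr)$ equipped with the comparison isomorphism $\varpi$ defines a $\Q$-linear Tannakian datum; let $G_{DRB}(X)$ be the Tannakian group of the subcategory generated by the De Rham-Betti realization of $X$ and its tensor constructions, viewed as a closed subgroup of $GL(H^\ast_B(X))$. Then $\sG^\ast(X^n)$ is, tautologically, the space of $G_{DRB}$-invariants in $\oplus H^{2j}_B(X^n)(j)$, exactly parallel to the role of $H_\ell$ in Section 2.

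Next I would seek an observability statement analogous to Theorem \ref{T2}: namely that $G_{DRB}(X)$ is an observable subgroup of $GL(H^\ast_B(X))$, so that $G_{DRB}$ is recovered as the largest closed subgroup fixing all De Rham-Betti classes on powers $X^n$. The analogue of Moonen's argument (step $b)$ in the proof of \ref{T2}) would require that every character of $G_{DRB}$ arising in a $GL$-representation extend to a ``Tate twist'', i.e.\ a power of the De Rham-Betti Tate object $\Q(1)$; equivalently, one-dimensional De Rham-Betti realizations of geometric origin over a number field should be potentially Tate twists. This is the genuine arithmetic input and plays the role of the Hodge-Tate property in the $\ell$-adic setting.

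Granting observability, the argument of Corollary \ref{cor1} transfers almost verbatim. One has the motivic Galois group $G_{mot}(X) \subset GL(H^\ast_B(X))$, which is reductive in the pure case, and by construction algebraic classes on all $X^n$ are motivated, hence fixed by $G_{mot}$; in particular $G_{DRB} \subset G_{mot}$. To actually \emph{prove} $G(X)$, the strategy is: assuming $G(X^n)$ for all $n$ (or enough of them to cut out the tensor invariants of $G_{mot}$), one obtains $V^{G_{DRB}} = V^{G_{mot}}$ for every $G_{mot}$-representation $V$, and then Lemma \ref{L1} combined with observability forces $G_{DRB} = G_{mot}$. Conversely, to deduce surjectivity of the cycle class map, one needs to compare $\sG^\ast$ with the space of motivated classes, for which the natural bridge is Tate's conjecture $T(X^n)$ together with the equality between motivated classes and $G_{mot}$-invariants.

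The decisive obstacle is the arithmetic input in step two: there is no known De Rham-Betti analogue of the Hodge-Tate theorem guaranteeing that characters of $G_{DRB}$ of geometric origin are potentially cyclotomic, since De Rham-Betti periods are of a genuinely transcendental nature with no local arithmetic counterpart to inertia at $\ell$. Without such a transcendence input, one cannot rule out exotic one-dimensional De Rham-Betti subrepresentations, and hence cannot identify $G_{DRB}$ with the reductive group $G_{mot}$. This is precisely the gap that the $p$-adic section of the paper is designed to circumvent: replacing $\varpi$ by a $p$-adic comparison isomorphism restores access to $p$-adic Hodge theory, which is what the announced theorem exploits to establish the $p$-adic analogue of $G(X)$ under $T(X^n)$ for all $n$ and a sufficiently general $\iota_p$.
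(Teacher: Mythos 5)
The statement you were asked about is not a theorem of the paper: it is the \emph{weak form of Grothendieck's period conjecture}, stated as a conjecture precisely because no proof is known, and the paper offers none. So there is no ``paper's own proof'' to compare against, and your proposal is not a proof either --- as you yourself acknowledge in your final paragraph. What you have written is a strategy sketch together with a correct diagnosis of why that strategy cannot currently be completed.

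That diagnosis is accurate and worth recording. The formal skeleton does transfer: the category $Vec_{K,\Q}$ from the paper's remark gives a Tannakian group $G_{DRB}(X)\subset GL(H^\ast_B(X))$ whose invariants on powers of $X$ are exactly the De Rham--Betti classes, and one has $G_{DRB}\subset G_{mot}$; if one knew that $G_{DRB}$ is observable in $GL(H^\ast_B(X))$ \emph{and} that De Rham--Betti classes on all $X^n$ are algebraic (which is the conjecture $G(X^n)$ itself, so the logic here is circular as a proof of $G(X)$ --- at best it would yield an implication analogous to Corollary \ref{cor1}, namely ``$G(X^n)$ for all $n$ implies that $G_{DRB}=G_{mot}$''), then Lemma \ref{L1} would close the loop. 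The genuinely missing ingredient is the observability step: in the $\ell$-adic proof of Theorem \ref{T2}, characters of $H_\ell$ are controlled by Kronecker--Weber plus the Hodge--Tate property of geometric representations; there is no known transcendence statement playing that role for one-dimensional De Rham--Betti subobjects, so one cannot rule out exotic characters of $G_{DRB}$ and cannot verify condition $3)$ of Theorem \ref{T1}. This is exactly why the paper retreats to the $p$-adic period isomorphism $\varpi_{\iota_p}$ in Theorem \ref{T3}, where $p$-adic Hodge theory (Hodge--Tate representations, Bogomolov's openness) again becomes available --- and even there only conditionally on $T(X^n)$ for all $n$ and for a sufficiently general $\iota_p$. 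In short: your proposal correctly identifies the obstruction, but it does not and cannot prove the stated conjecture, which remains open.
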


          The analogy with $T(X)$ is patent. In one of its stronger forms\footnote{a detailed discussion, including a historical viewpoint, can be found in \cite{A20}. At the time of the first expositions of Grothendieck's period conjecture, a non-conjectural motivic Galois theory was not available, which made them a little awkward (or look like ``meta-conjectures".}, Grothendieck's period conjecture predicts 
 
\smallskip
\centerline{ $  tr. deg_\Q$ (periods of $X$) $= dim\, G_{mot}(X).$ } 
 
 \begin{rem} One defines naturally a category $Vec_{K,\Q}$ whose objects are pairs of finite dimensional vector spaces $(W, V)$ over $K$ and $\Q$ respectively, together with an isomorphism $\iota: W\otimes_K \C \cong V\otimes_\Q \C$. This is a tannakian category over $\Q$, and $(W,V,\iota)\mapsto V$ yields a fiber functor. 
 
 The De Rham and Betti realizations and the period isomorphism yield an exact $\otimes$-functor from the tannakian category of motives (say in the sense of Nori) to $Vec_{K,\Q}$, whence a morphism in the other direction between the tannakian groups, which expresses the tannakian situation behind $G(X)$. The issue of {\it observability} is quite relevant in this context.
   \end{rem}

 \subsection{Period isomorphism: the $p$-adic case.}
 
Let us fix an embedding
 $\iota_p: \bar K \subset \bar \Q_p$, and denote by $\hat K$ the $p$-adic completion of $K$.

Artin's comparison theorem between Betti and etale cohomologies, combined with the $p$-adic-etale comparison theorem (Fontaine, Messing, Faltings et al.) gives rise to a $p$-adic analog of $\varpi$:
 $$ \varpi_{\iota_p}: \; H^{\ast}_{dR}(X)(j)\otimes_K B_{dR, \hat K} \cong H^{\ast}_B(X)(j)\otimes_\Q B_{dR, \hat K}.$$  
One may define similarly the {\it $p$-adic De Rham-Betti classes}, \ie classes $\xi \in  \oplus H^{2j}_{dR}(X)(j)$ such that $\varpi(\xi)\in \oplus H^{2j}_B(X)(j)$. 
 
 Let us denote by $\sG_{\iota_p}^\ast(X)$ the graded $\Q$-space of {\it $p$-adic De Rham-Betti classes}.

\medskip \noindent Around 1988, Fontaine asked questions of the following kind:
 
 \begin{qn} ($G_{\iota_p}(X)$)$\;$ Is the cycle class map $CH^\ast(X) \rightarrow \sG_{\iota_p}^\ast(X)$ surjective? \end{qn} 

  Stronger form: 
  {  $  tr. deg_\Q$ ($\iota_p$-periods of $X$) $\stackrel{?}{=} dim\, G_{mot}(X).$ } 

 \medskip\noindent 
Soon after, we gave a qualified answer: 
 {\it the answer depends on $\iota_p$.}

\medskip \begin{ex}\cite{A90} Let us take $\,p=11$ and $X$ be the modular curve $X_0(11)$. There are embeddings $\iota_p, \iota'_p$ for which 

$ tr. deg_\Q$ ($\iota_p$-periods of $X$) $= dim\, G_{mot}(X)= 4,$ and $G_{\iota_p}(X)$ has a positive answer,

$ tr. deg_\Q$ ($\iota'_p$-periods of $X$) $\leq  3,$ and $G_{\iota_p}(X)$ has a negative answer. \end{ex}

 \noindent This prompted Fontaine's next question: what if $\iota_p$ is sufficiently general?
 Our answer \cite{A90}, which involves the Tate conjecture, can now be expressed in a more transparent and general way, using motivic Galois groups:

 \begin{thm}\label{T3}  
            {Assume $\iota_p$ is ``sufficiently general". Then $T(X^n)$ for all $n\,$ implies that $G_{\iota_p}(X^n)$ has a positive answer for all $n\; $ (also in the strong form). \,}  \end{thm}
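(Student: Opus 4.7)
The plan is to work tannakianly and reduce to a density statement for the decomposition group at $\iota_p$, with the $p$-adic comparison isomorphism of Fontaine--Faltings as the crucial input. Let $\sV_p$ be the tannakian $\Q$-category of triples $(W,V,\iota)$, where $W$ is a finite-dimensional $K$-vector space, $V$ is a finite-dimensional $\Q$-vector space, and $\iota\colon W\otimes_K B_{dR,\hat K}\iso V\otimes_\Q B_{dR,\hat K}$. The $p$-adic De Rham--Betti realization provides an exact $\otimes$-functor from Nori motives in $\langle X\rangle$ to $\sV_p$, inducing a closed subgroup $G_{dRB,\iota_p}(X)\subset G_{mot}(X)$ of tannakian groups acting on the Betti fiber functor; the space $\sG^\ast_{\iota_p}(X^n)$ is by construction the space of $G_{dRB,\iota_p}(X)$-invariants. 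Under $T(X^n)$ for all $n$, motivated classes coincide with algebraic classes, so $G_{mot}(X)$-invariants are algebraic. Hence proving $G_{\iota_p}(X^n)$ for all $n$ (and the strong form) reduces to the equality $G_{dRB,\iota_p}(X) = G_{mot}(X)$.

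The key observation is that the decomposition group $G_{\hat K} := \mathrm{Gal}(\bar\Q_p/\hat K)\subset G_K$ embeds into $G_{dRB,\iota_p}(X)(\Q_p)$. Indeed, since $\varpi_{\iota_p}$ is Galois-equivariant and $K$-rational de Rham morphisms are $G_{\hat K}$-fixed, any morphism $(f_{dR},f_B)$ in the image $\langle X\rangle^p\subset\sV_p$ of $\langle X\rangle$ has $f_B\otimes\Q_p$ commuting with the $G_{\hat K}$-action on \'etale cohomology. Hence each $\gamma\in G_{\hat K}$ acts as a tensor automorphism of $\omega_B\otimes\Q_p$ on $\langle X\rangle^p$, and the Zariski closure $D_p$ of the image of $G_{\hat K}$ in $H_p$ sits inside $G_{dRB,\iota_p}(X)\otimes\Q_p$. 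Call $\iota_p$ \emph{sufficiently general} if $D_p = H_p$. Combining this with Corollary \ref{cor1} and Tate (which give $H_p = G_{mot}(X)\otimes\Q_p$), we obtain
$$G_{dRB,\iota_p}(X)\otimes\Q_p \supset D_p = H_p = G_{mot}(X)\otimes\Q_p,$$
which forces $G_{dRB,\iota_p}(X) = G_{mot}(X)$ by faithfully flat descent. The strong form is then obtained via the tannakian dictionary: the Zariski closure $Z$ of the tautological $B_{dR,\hat K}$-point $\varpi_{\iota_p}$ in the $G_{dRB,\iota_p}(X)$-torsor $P := \mathrm{Isom}^\otimes(\omega_{dR},\omega_B)$ is stable under the $G_{\hat K}$-action (inherited from its action on $B_{dR,\hat K}$, which by $G_{\hat K}$-equivariance of $\varpi_{\iota_p}$ matches the torsor action via $G_{\hat K}\subset G_{dRB,\iota_p}(X)(\Q_p)$). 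By density, $Z\otimes\Q_p$ is stable under the whole group $G_{dRB,\iota_p}(X)\otimes\Q_p$ acting transitively on $P\otimes\Q_p$, so $Z = P$, giving $\mathrm{tr.deg}_\Q(\iota_p\text{-periods}) = \dim P = \dim G_{mot}(X)$.

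The main obstacle is justifying the density hypothesis $D_p = H_p$ for sufficiently general $\iota_p$. The example of $X_0(11)$ at $p = 11$ (where complex multiplication of the Jacobian produces embeddings for which $D_p$ is a proper subgroup of $H_p$) shows that the condition is non-trivial. Following the approach of \cite{A90}, one should combine the constraint that $H_p$ is a Hodge--Tate $p$-adic Lie group (Sen--Fontaine) with a local-global density argument at places above $p$, to ensure that for a dense (or even measure-theoretically generic) subset of embeddings $\iota_p\colon\bar K\hookrightarrow\bar\Q_p$, the decomposition group is Zariski dense in $H_p$. Making precise and optimal the genericity condition on $\iota_p$ is the technical crux.
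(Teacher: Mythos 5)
Your reduction of the weak form to the equality $G_{dRB,\iota_p}(X)=G_{mot}(X)$, and the observation that the decomposition group $G_{\hat K}$ fixes all $p$-adic De Rham--Betti classes in every tensor construction and hence lands in $G_{dRB,\iota_p}(X)(\Q_p)$, are both sound. But everything then hangs on the hypothesis $D_p=H_p$, which you adopt as the definition of ``sufficiently general'' and explicitly defer as the ``technical crux''. This cannot be outsourced: it is the entire content of the theorem, and moreover it is the wrong condition. Replacing $\iota_p$ by $\iota_p\circ\gamma$ only conjugates the image of $G_{\hat K}$ inside $G_K$, so up to the finite choice of the place $v\mid p$ of $K$ induced by $\iota_p$, the condition $D_p=H_p$ does not depend on $\iota_p$ at all; it therefore cannot account for the genuine dependence on $\iota_p$ exhibited by the $X_0(11)$ example. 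Worse, it typically fails: for a non-CM elliptic curve with good ordinary reduction at $v$, the local representation on $H^1$ is reducible (connected--\'etale sequence of the $p$-divisible group), so $D_p$ sits in a Borel while $H_p\supset GL_2$ by Serre. Your theorem would then be vacuous in the most common situations, for \emph{every} choice of $\iota_p$.

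The paper's mechanism is different and never uses the decomposition group. First, the period torsor $P(X)$ under $G_{mot}(X)_K$ is shown to be an \emph{irreducible} $K$-variety: $T(X^n)$ forces the finite quotient of $G_{mot}(X)$ to be an Artin motive Galois group, whose period torsor is irreducible, and the fibres of $P(X)\to P_{fin}(X)$ are torsors under the connected group $G_{mot}(X)^0$. ``Sufficiently general'' then means that the point $\varpi_{\iota_p}\in P(X)(B_{dR,\hat K})$ avoids each of the countably many proper $K$-subvarieties $V_m\subset P(X)$ --- a condition on the period point, which genuinely moves under $\iota_p\mapsto\iota_p\circ\gamma$ (the point becomes $\gamma\circ\varpi_{\iota_p}$, a translate by a $\Q_p$-point whose $K$-Zariski closure can grow). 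Existence of a good $\gamma$ is what requires proof, and it is obtained by combining Bogomolov's theorem (the image $\Gamma_p$ of the \emph{global} group $G_K$ is open in $H_p(\Q_p)$, because the representation is Hodge--Tate), Theorem \ref{T2} together with $T(X^n)$ (which give $H_p=G_{mot}(X)_{\Q_p}$, whence $\Gamma_p$ is open in $G_{mot}(X)(\Q_p)$), and a Baire category argument against the countably many lower-dimensional loci $W_m(\Q_p)$ obtained from the $V_m$ by transporting along $\varpi_{\iota_p}^{-1}$. To repair your argument you would need to replace Zariski density of the local group by this global openness-plus-Baire step, and to supply the irreducibility of $P(X)$; as written, the proposal omits the one statement that makes the theorem non-vacuous.
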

 
\begin{proof}
1) We have a period torsor $P(X)$ under $G_{mot}(X)_K$ defined over $K$ \cite[Ch. 7]{A04}, and $\varpi_{\iota_p}$ defines a $B_{dR, \hat K}$-point of $P(X)$. Note that for all $n>0$, there is a canonical isomorphism $P(X)\cong P(X^n)$ sending $\varpi_{\iota_p, X}$ to $\varpi_{\iota_p, X^n}$.

The tannakian category of motives (with coefficients in $\Q_p$) generated by $X$ is equivalent to $Rep_{\Q_p} G_{mot}(X)$, and its full subcategory of finite objets is equivalent to $Rep_{\Q_p} (G_{mot}(X)/G_{mot}(X)^0)$. There is a correponding map of torsors $P(X)\to P_{fin}(X)$. 

If $T(X^n)$ holds for all $n\,$, motivated cycles on $X^n$ are algebraic, and the finite motives are Artin motives. It is well-known that period torsors of Artin motives over $K$ are irreducible, hence $P_{fin}(X)$ is irreducible, with function field a finite extension $K'$ of $K$. 
 The generic fiber of  $P(X)\to P_{fin}(X)$ is a torsor under $G_{mot}(X)^0_{K'}$, hence irreducible. Therefore $P(X)$ is irreducible. 
  
2) The issue is whether $\varpi_{\iota_p, X}$ is {\it generic} in $P(X)$: indeed, if $\varpi_{\iota_p, X}$ is generic in $P(X)$, so is $\varpi_{\iota_p, X^n}$ in $P(X^n)\cong P(X)$, and $CH^\ast(X^n) \rightarrow \sG_{\iota_p}^\ast(X^n)$ is then surjective for any $n\,$ (and $  tr. deg_\Q$ ($\iota_p$-periods of $X$) $=  dim\, G_{mot}(X)$). 
    
 We fix an initial $\iota_p$; changing it into $\iota_p \circ \gamma$, for $\gamma \in G_K$, changes $\varpi_{\iota_p}$ into $\gamma \circ \varpi_{\iota_p}$.   
 Let $\hat L$ be a finite extension of $\hat K$ such that the Zariski closure of the image of $\varpi_{\iota_p}$ in $P(X)_{\hat K}$ has a $\hat L$-point $z$. We may identify $G_{mot}(X)_{\hat L}$ with $P(X)_{\hat L}: \, 1\mapsto z$. 
 
  Let $\{V_m\}$ be the countable set of $K$-subvarieties of $P(X)$ of lower dimension. Then $(V_{m})_{B_{dR, \hat L}}\circ \varpi_{\iota_p}^{-1}$ is an algebraic subvariety of $G_{mot}(X)_{B_{dR, \hat L}}$ of lower dimension. Its $\hat L$-points with respect to the $\hat L$-structure $G_{mot}(X)_{\hat L} $ are contained in $(V_{m})_{\hat L}\circ z^{-1}$. Thus its $\Q_p$-points with respect to the $\Q_p$-structure $G_{mot}(X)_{\Q_p}$
  are contained in the $\Q_p$-Zariski closure $W_m$ of $(V_{m})_{\hat L}\circ z^{-1}$, a subvariety of $G_{mot}(X)_{\Q_p}$ of lower dimension (so that the complement of $W_m(\Q_p)$ is dense open in $G_{mot}(X)(\Q_p)$). 
  
\smallskip   On the other hand, because $H^\ast(\bar X, \Q_p)$ is Hodge-Tate, an argument of Bogomolov \cite{Bo80} shows that the image $\Gamma_p$ of $ G_K \stackrel{\rho_p}{\to} GL(H^\ast(\bar X, \Q_p)) $ is open in the $\Q_p$-points of its Zariski closure. If $T(X^n)$ holds for all $n$, this Zariski-closure is $G_{mot}(X)_{\Q_p}$ (via Theorem \ref{T2}).  
 By Baire's category argument, $\Gamma_p$ is not contained in $\cup W_m(\Q_p)$: there exists $\gamma\in G_K$ whose image in $G_{mot}(X)(\Q_p)$ does not lie in any $W_m(\Q_p)$. Therefore $\gamma\circ \varpi_{\iota_p}$ is not contained in any $V_m$, hence is generic in the irreducible $K$-variety $P(X)$.  \end{proof}

  \begin{rem} In Theorem \ref{T3}, ``$\iota_p$ is sufficiently general" can be taken in a sense independent of $X$. 
  
  Indeed, $G_K$ is compact, hence Baire. The complement  $\Delta_{p, X}$ of $\cup W_m(\Q_p)$ in $\Gamma_{p }$ (with the notation of the above proof) is a dense $G_\delta$-set in $\Gamma_{p }$. By Bogomolov's openness result for $\rho_{X, p}$, $\rho_{X, p}^{-1}(\Delta_{p, X})$ is a dense $G_\delta$-set in $G_K$, and since there are countably many isomorphism classes of $X$'s,  $\Delta := \cap_{X}\, \rho_{X, p}^{-1}(\Delta_{p, X})$ is a dense $G_\delta$-set. Then for any $\gamma\in \Delta$, $\iota_p \circ \gamma$ is ``sufficiently general" in the sense of Theorem \ref{T3},  for all projective smooth varieties $X$ over $K$ simultaneously. \end{rem}

 \medskip {\it Acknowledgements.} I thank Bruno Kahn for his careful reading, and his corrections, questions and numerous comments which improved a lot the presentation.
  
     \end{sloppypar}
  \medskip   
 

\begin{thebibliography}{I}
 \bibitem{A90} Y. Andr\'e, $p$-adic Betti lattices, in Proc. Trento 1989, Springer LN. 1454, (1990), 23-63.
  
 \bibitem{A96} Y. Andr\'e, Pour une th\'eorie inconditionnelle des motifs, Publ. Math. I.H.E.S. {83} (1996), 5-49. 
 
 \bibitem{A04} Y. Andr\'e, {\it Une introduction aux motifs (motifs purs, motifs mixtes, p\'eriodes)},  Panoramas et Synth\`eses {17}, S.M.F (2004).

  \bibitem{A20} Y. Andr\'e, Letter to C. Bertolin on the period conjecture; appendix to: C. Bertolin, Third kind elliptic integrals and $1$-motives, J. Pure Appl. Algebra 224, 10 (2020).

  \bibitem{A21} Y. Andr\'e, Normality criteria, and monodromy of variations of mixed Hodge structures; appendix to: B. Kahn, Albanese kernels and Griffiths groups, Tunisian J. Math. 3 (2021), 589-656.
    
 \bibitem{Ar13} D. Arapura, An abelian category of motivic sheaves, Advances in Math {233} (2013), 135-195.
 
  \bibitem{Bo80} F. A. Bogomolov, Sur l’alg\'ebricit\'e des repr\'esentations $\ell$-adiques,C. R. Acad. Sci. Paris, A-B 290 (1980), 701-703.
 
 \bibitem{BC16} J.B. Bost, F. Charles, Some remarks concerning the Grothendieck period conjecture, J. Reine Angew. Math. 714 (2016), 175-208. 
  
  \bibitem{DN} F. D\'eglise, W. Nizio\l{}, On absolute cohomology and syntomic coefficients I, Comment. Math. Helv. 93, 1 (2018), 71-131.
  
\bibitem{D80} P. Deligne, La conjecture de Weil II, Publ. Math. I.H.E.S. 52 (1980), 137-252.

\bibitem{EHS} H. Esnault, P. Hai, X. Sun, {On Nori's fundamental group scheme. Geometry and dynamics of groups and spaces}, 377--398, Progr. Math. {\bf 265}, Birkh\"auser,  2008. 

 \bibitem{F} G. Faltings, $p$-adic Hodge theory, J.A.M.S. 1 (1988), 255-299.

\bibitem{Fo} J.M. Fontaine, Repr\'esentations $p$-adiques semi-stables, {\it in} P\'eriodes $p$-adiques (Bures 1988), Ast\'erisque 223 (1994), 113-184.

\bibitem{Gr97} F. Grosshans, {\it Algebraic homogeneous spaces and invariant theory}, Springer LN 1673 (1997).
 
  \bibitem{H94} D. Hayes, Explicit class field theory for rational function fields, Trans. AMS. 189 (1974), 77-91.

  \bibitem{K04} B. Kahn, On the semisimplicity of Galois actions, Rendiconti Sem. Matematico Univ. Padova 112 (2004), 97-102.
     
  \bibitem{M19} B. Moonen, A remark on the Tate conjecture, Journal of Algebraic Geometry, 28 (2019), 599-603.
  
  \bibitem{O'S} P. O'Sullivan, Super tannakian hulls, preprint https://arxiv.org/abs/2012.15703.
  
  
    \bibitem{S94} J.P. Serre, Propri\'et\'es conjecturales des groupes de Galois motiviques et des repr\'esentations $\ell$-adiques, {\it in} Motives (U. Jannsen, S. Kleiman, J.P. Serre eds.), Proc. Symp. pure Math 55 (1), 377-400.

    
     \bibitem{S97} J.P. Serre, {\it Lectures on the Mordell-Weil theorem}, Aspects of Math. E15, Vieweg (1997).
  
  \bibitem{T} J. Tate, Conjectures on algebraic cycles in $\ell$-adic cohomology, {\it in} Motives (U. Jannsen, S. Kleiman, J.P. Serre eds.), Proc. Symp. pure Math 55 (1), 71-83.
  
  \bibitem{TB05} N. G. Thang, D. P. Bac, Some rationality properties of observable groups and related questions, Illinois J. Math. {49} 2 (2005), 431-444.
  
    \bibitem{To17} B. Totaro, Recent progress on the Tate conjecture, Bull. A. M. S. {54} 4 (2017), 575-590.
 \end{thebibliography}
    \end{document}